\renewcommand{\phi}{\varphi}
\newcommand{\N}{\mathbb{N}}
\newcommand{\R}{\mathbb{R}}
\newcommand{\calF}{\mathcal{F}}
\newcommand{\calG}{\mathcal{G}}
\newcommand{\calA}{\mathcal{A}}
\newcommand{\calS}{\mathcal{S}}
\newcommand{\calH}{\mathcal{H}}
\newcommand{\norm}[1]{\| #1 \|}
\newcommand{\set}[2]{\left\{#1:#2\right\}}
\DeclareMathOperator{\sign}{\mathrm{sign}}
\DeclareMathOperator{\Id}{\mathrm{Id}}
\DeclareMathOperator*{\argmin}{\mathrm{arg\,min}}
\newcommand{\prox}{\mathrm{prox}}
\newcommand{\omobs}{\omega_\mathrm{obs}}
\newcommand{\lastgamma}{\bar{\gamma}}  	
\pgfplotsset{compat=newest}
\pgfplotsset{plot coordinates/math parser=false}
\title{A convex penalty for switching control of partial differential equations}
\author{Christian Clason\thanks{Faculty of Mathematics, University Duisburg-Essen, 45117 Essen, Germany (\email{christian.clason@uni-due.de})}
    \and Armin Rund\thanks{Institute of Mathematics and Scientific Computing, University of Graz, Heinrichstrasse 36, 8010 Graz, Austria (\email{armin.rund@uni-graz.at}, \email{karl.kunisch@uni-graz.at}).}
    \and Karl Kunisch\footnotemark[2]
    \and Richard C.~Barnard\footnotemark[2]
}
\date{July 29, 2015}
\begin{document}

\maketitle
\allowdisplaybreaks

\begin{abstract}
    A convex penalty for promoting switching controls for partial differential equations is introduced; such controls consist of an arbitrary number of components of which at most one should be simultaneously active.
    Using a Moreau--Yosida approximation, a family of approximating problems is obtained that is
    amenable to solution by a semismooth Newton method. The efficiency of this approach and the structure of the obtained controls are demonstrated by numerical examples.
\end{abstract}

\section{Introduction}

Switching control refers to time-dependent optimal control problems with a vector-valued control of which at most one component should be active at every point in time.
We focus here on optimal tracking control for a linear diffusion equation
$Ly = Bu$ on $\Omega_T:=(0,T]\times \Omega$, $y(0)=y_0$ on $\Omega$,
where $L=\partial_t - A$ for an elliptic operator $A$ defined on $\Omega\subset \R^n$ carrying suitable boundary conditions. The control operator $B$ is defined by
\begin{equation}
    (Bu)(t,x)= \sum_{i=1}^N\chi_{\omega_i}(x) u_i(t),
\end{equation}
where $\chi_{\omega_i}$ is the characteristic function of the given control domain $\omega_i\subset \Omega$ of positive measure. Furthermore, let $\omobs\subset \Omega$ denote the observation domain and let $y^d\in L^2(0,T;L^2(\omobs))$ denote the target.
We consider the standard optimal control problem
\begin{equation}
    \left\{\begin{aligned}
            \min_{u \in L^2(0,T;\R^N)} &\frac12\norm{y-y^d}_{L^2(0,T;L^2(\omobs))}^2 + \frac\alpha2 \int_0^T|u(t)|_2^2\,dt,\\
            \text{s.\,t.}\quad & Ly = Bu, \quad y(0)=y_0,
    \end{aligned}\right.
\end{equation}
where $|v|_2^2 =\sum_{j=1}^N v_j^2$ denotes the squared $\ell^2$-norm on $\R^N$.
To promote the switching structure of the optimal control $\bar u\in L^2(0,T;\R^N)$, we suggest adding an additional penalty term
\begin{equation}
    \beta\int_0^T\sum_{\stackrel{i,j=1}{i< j}}^N |u_i(t) u_j(t)|\,dt
\end{equation}
with $\beta>0$ to the objective, which can be interpreted as an $L^1$-penalization of the switching constraint $u_i(t)u_j(t)=0$ for $i\neq j$ and $t\in[0,T]$. For the choice $\beta=\alpha$, the sum of the control cost and the penalty can be simplified to yield the problem
\begin{equation}\label{eq:problem_motiv}
    \left\{\begin{aligned}
            \min_{u \in L^2(0,T;\R^N)} &\frac12\norm{y-y^d}_{L^2(0,T;L^2(\omobs))}^2 + \frac\alpha2 \int_0^T|u(t)|_1^2\,dt,\\
            \text{s.\,t.}\quad & Ly = Bu, \quad y(0)=y_0,
    \end{aligned}\right. \tag{P}
\end{equation}
where $|v|_1 = \sum_{j=1}^N |v_j|$ denotes the $\ell^1$-norm on $\R^N$.
This is a convex optimization problem, for which we derive first-order optimality conditions in primal-dual form whose Moreau--Yosida regularization (to be introduced below) can be solved using a superlinearly convergent semismooth Newton method. The effect of other choices for $\beta$ will be discussed in \cref{sec:multiswitching:relation}.

The approach which we follow here is related to both the switching control problem in \cite{CIK:2014} and the distributed parabolic sparse control problem in \cite{CCK:2012}.
In \cite{CIK:2014} a nonconvex formulation in the case where $N=2$ was considered; we compare its convex relaxation to the present approach below. 
One advantage of the approach presented in this paper over that in \cite{CIK:2014} is given by the fact that there is no significant additional technical burden when considering switching between $N>2$ controls.
In \cite{CCK:2012} the $L^2$ norm in time of the measure norm in space was used to promote temporally varying sparsity in space. While the choice of the nonsmooth functional involving the controls in \eqref{eq:problem_motiv} is motivated by sparsity considerations, one can arrive at this functional also from controllability--observability considerations. In fact, it was shown in
\cite[Theorem 4.1]{Zuazua:2011} that -- provided an appropriately defined controllability Gramian has full rank -- exact null controls with perfect switching have minimal $L^2(0,T;\R^2)$ norm, where $\R^2$ is endowed with the $\ell^1$ norm.
In contrast, \cite{Hante:2013} follows a different approach where binary or integer decision variables are sought within a relaxation technique combined with a suitable rounding strategy. 

Let us comment on further related work. While our work here aims at formulating 
optimal controls problems with switching controls in a way that allows an efficient numerical treatment, the larger body of work focuses on the stabilization of switching systems. For ordinary differential equations we refer to  e.g., \cite{Dolcetta:1984,Liberzon:2003,Shorten:2007}.
For partial differential equations, this problem has received comparatively little attention.
In both cases, one should distinguish switching control in the sense defined above from the control of switched systems.
For the latter in the context of PDEs, we refer to \cite{Hante:2009,Stojanovic:1989}. 
In \cite{Hante:2011}, converse Lyapunov theorems for abstract switched systems are developed. Lyapunov techniques are also used to study switches in hyperbolic systems in \cite{Prieur:2014}, and existence results for optimal control of switching systems modelling the use of bacteria for pollution removal are obtained in \cite{Seidman}.
Exact null controls with switching structure for the heat and wave equation were treated in \cite{Zuazua:2011,Lu:2013,Wang:2014} and \cite{Gugat:2008}, respectively.

This work is organized as follows. \Cref{sec:convex} discusses the existence and first-order optimality conditions
for solutions to \eqref{eq:problem_motiv} as well as its regularization within an abstract convex analysis framework.
Explicit pointwise characterizations of the switching relations arising from the optimality system and for its regularization are given in \cref{sec:multiswitching}.
Here we also discuss the relation of the proposed switching functional in \eqref{eq:problem_motiv} to other possible choices of the penalty term.
\Cref{sec:newton} is concerned with the numerical solution of the regularized optimality system by a semismooth Newton method.
A numerical example for switching control of a two-dimensional linear heat equation is computed in \cref{sec:results}.

\section{Convex analysis approach}\label{sec:convex}

We recall the convex analysis approach for (nonconvex) switching controls for partial differential equations from \cite{CIK:2014}, which is also applicable to the convex penalty considered here.
For this purpose, we consider Problem \eqref{eq:problem_motiv} in the reduced form
\begin{equation}
    \min_u \calF(u)+\calG(u),
\end{equation}
with $\calF:L^2(0,T;\R^N)\to \R$ and $\calG:L^2(0,T;\R^N)\to \R$
given by
\begin{equation}
    \calF(u)= \frac{1}{2} \|Su-y^d\|_{L^2(0,T;L^2(\omobs))}^2, \qquad
    \calG(u) = \frac\alpha 2\int_0^T |u(t)|_1^2\,dt,
\end{equation}
where the continuous affine solution operator $S: u\mapsto y$ assigns to any control $u \in L^2(0,T;\R^N)$
the unique state $y \in L^2(\Omega_T)$ satisfying the state equation $Ly=Bu$ with initial condition $y(0)=y_0$ subject to appropriate boundary conditions.
Here we assume that the coefficients of $A$, the boundary and initial conditions as well as the domain $\Omega$ are sufficiently regular that the range of $S$ is contained in
\begin{equation}
    \label{eq:range_S}
    W(0,T):=L^2(0,T;H^1_0(\Omega))\cap W^{1,2}(0,T;H^{-1}(\Omega))\hookrightarrow C([0,T];L^2(\Omega)).
\end{equation}

Since $S$ is affine, $\calF$ is proper, convex and lower semicontinuous. Furthermore, since the squared norm $|\cdot|_1^2$ is convex,
$\calG$ is proper, convex, lower semicontinuous, and, in addition, radially unbounded. Existence of a solution thus follows from standard arguments, e.g., Tonelli's direct method.
\begin{proposition}
    There exists a minimizer $\bar{u}$ for Problem \eqref{eq:problem_motiv}.
\end{proposition}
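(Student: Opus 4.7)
The plan is to apply Tonelli's direct method, as the excerpt suggests. I would take a minimizing sequence $\{u_k\} \subset L^2(0,T;\R^N)$ for the objective $J := \calF + \calG$, and aim to extract a weakly convergent subsequence whose limit attains the infimum. Since $L^2(0,T;\R^N)$ is a reflexive Hilbert space, it suffices to establish (i) coercivity of $J$ and (ii) weak lower semicontinuity of $J$.

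For coercivity, I would exploit the elementary pointwise inequality $|v|_1 \geq |v|_2$ on $\R^N$, which gives
\[
    \calG(u) = \frac{\alpha}{2}\int_0^T |u(t)|_1^2\,dt \;\geq\; \frac{\alpha}{2}\norm{u}_{L^2(0,T;\R^N)}^2.
\]
Together with $\calF \geq 0$, this shows $J(u) \to \infty$ as $\norm{u}_{L^2} \to \infty$, so any minimizing sequence $\{u_k\}$ is bounded and admits a weakly convergent subsequence with some weak limit $\bar u$.

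For weak lower semicontinuity, I would argue summand by summand. The operator $S$ is continuous and affine from $L^2(0,T;\R^N)$ into $L^2(0,T;L^2(\omobs))$, so $\calF$ is a composition of an affine continuous map with a squared norm; it is therefore convex and strongly continuous, hence weakly lower semicontinuous. For $\calG$, the integrand $v \mapsto \frac{\alpha}{2}|v|_1^2$ is convex, nonnegative, and continuous, so $\calG$ is a convex integral functional that is finite and strongly continuous on $L^2(0,T;\R^N)$; convexity plus strong lower semicontinuity (the latter via Fatou applied to a strongly convergent subsequence with almost-everywhere convergence) then yields weak lower semicontinuity by Mazur's lemma. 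Passing to the limit along the extracted subsequence, one obtains $J(\bar u) \leq \liminf_k J(u_k) = \inf J$, so $\bar u$ is a minimizer.

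Nothing in this argument is genuinely hard; the only point requiring any care is the weak lower semicontinuity of $\calG$, which is handled by the standard convexity-plus-strong-continuity principle rather than by any regularity of the integrand. Uniqueness is not claimed in the statement, and indeed one should not expect it, because $|\cdot|_1^2$ is convex but not strictly convex on $\R^N$.
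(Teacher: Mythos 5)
Your proof is correct and follows the same route the paper indicates: Tonelli's direct method, using the coercivity of $\calG$ (the paper's ``radial unboundedness,'' which your bound $|v|_1\geq|v|_2$ makes quantitative) together with convexity and lower semicontinuity of $\calF$ and $\calG$ to pass to a weak limit of a minimizing sequence. The paper leaves these steps as ``standard arguments,'' and your write-up is a faithful and correct elaboration of exactly that argument.
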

We next derive first-order optimality conditions in primal-dual form.
Throughout, for any proper convex function $\calH$, we denote by $\calH^*$ its
Fenchel conjugate and by $\partial \calH$ its subdifferential; see, e.g., \cite{Bauschke:2011, Ekeland:1999a} for their definitions.
The following proposition is a direct consequence of the sum rule and inversion formula for convex subdifferentials (see, e.g., \cite[Corollary 16.24]{Bauschke:2011} for the latter) as well as the Fréchet-differentiability of $\calF$.
\begin{proposition}
    The control $\bar{u}\in L^2(0,T;\R^N)$ is a minimizer for \eqref{eq:problem_motiv}
    if and only if there exists a $\bar{p}\in L^2(0,T;\R^N)$ such that
    \begin{equation}\label{eq:formal_opt}
        \left\{\begin{aligned}
                -\bar p &= \calF'(\bar u),\\
                \bar u &\in \partial\calG^*(\bar p),
        \end{aligned}\right.
        \tag{OS}
    \end{equation}
    holds.
\end{proposition}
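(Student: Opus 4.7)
The plan is to apply Fermat's rule to the convex minimization problem and then use the sum rule together with the subdifferential inversion formula. Since $\calF$ and $\calG$ are proper, convex, and lower semicontinuous, $\bar u$ minimizes $\calF+\calG$ if and only if $0\in\partial(\calF+\calG)(\bar u)$.

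First I would invoke the sum rule $\partial(\calF+\calG) = \partial\calF + \partial\calG$. The necessary qualification condition is trivially satisfied because $\calF$ has full domain $\dom\calF = L^2(0,T;\R^N)$ and is continuous there (being a composition of a continuous affine map with a continuous quadratic), so for instance $0 \in \mathrm{core}(\dom\calF - \dom\calG)$. Next, since $\calF$ is Fréchet-differentiable on all of $L^2(0,T;\R^N)$, its convex subdifferential is the singleton $\partial\calF(\bar u) = \{\calF'(\bar u)\}$, with $\calF'(\bar u)\in L^2(0,T;\R^N)$ via the standard Riesz identification. Setting $\bar p := -\calF'(\bar u) \in L^2(0,T;\R^N)$ then produces the first relation $-\bar p = \calF'(\bar u)$ and reduces the optimality condition to $\bar p \in \partial\calG(\bar u)$.

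For the second relation I would apply the inversion formula for convex subdifferentials, \cite[Corollary 16.24]{Bauschke:2011}: since $\calG$ is proper, convex, and lower semicontinuous,
\begin{equation*}
    \bar p \in \partial\calG(\bar u) \quad\Longleftrightarrow\quad \bar u \in \partial\calG^*(\bar p),
\end{equation*}
yielding the second line of \eqref{eq:formal_opt}. The converse direction is obtained by reversing each of these equivalences: given $(\bar u,\bar p)$ satisfying \eqref{eq:formal_opt}, the inversion formula gives $\bar p \in \partial\calG(\bar u)$, and adding $\calF'(\bar u) = -\bar p \in \partial\calF(\bar u)$ yields $0 \in \partial\calF(\bar u) + \partial\calG(\bar u) = \partial(\calF+\calG)(\bar u)$, so $\bar u$ is a minimizer by convexity.

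None of the steps presents a genuine obstacle; the only points that warrant a line of justification are verifying the qualification condition for the sum rule (handled by continuity of $\calF$) and noting that $\calF'(\bar u)$ lies in $L^2(0,T;\R^N)$, which follows from Fréchet-differentiability of $\calF$ on this Hilbert space together with the self-duality identification.
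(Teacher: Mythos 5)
Your argument is correct and follows exactly the route the paper indicates: Fermat's rule, the subdifferential sum rule (justified by continuity of $\calF$), the identification $\partial\calF(\bar u)=\{\calF'(\bar u)\}$ from Fréchet differentiability, and the inversion formula of \cite[Corollary 16.24]{Bauschke:2011} to pass from $\bar p\in\partial\calG(\bar u)$ to $\bar u\in\partial\calG^*(\bar p)$. The paper leaves these steps as a one-sentence remark, and your proposal simply fills them in faithfully, including the converse direction.
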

Since $\calF$ is a standard quadratic tracking term, the first relation in \eqref{eq:formal_opt} can be expressed in a straightforward manner in terms of the solution operator $S=L^{-1}B$
and its adjoint $S^*=B^*L^{-*}$ (with homogeneous boundary and initial conditions), i.e., $\bar p=-S^*(S\bar u-y^d)$.
For later use, we point out that due to \eqref{eq:range_S} and the specific choice of $S$ there holds $\bar p\in V:=B^*(W(0,T)) \hookrightarrow L^{r}(0,T;\R^N)$ for any $r>2$.

The second relation is responsible for the switching structure of the optimal control $\bar u$, and we will give a pointwise characterization in \cref{thm:subdiff} below.

\bigskip

Our numerical approach is based on the Moreau--Yosida regularization of \eqref{eq:formal_opt}. Specifically, we replace $\partial\calG^*$ for $\gamma>0$ by
\begin{equation}
    \partial\calG^*_\gamma(p) := (\partial\calG^*)_\gamma(p) := \frac{1}{\gamma} \left(p-\prox_{\gamma \calG^*}(p)\right),
\end{equation}
where
\begin{equation}
    \prox_{\gamma \calG^*}(v) := \argmin_{w\in L^2(0,T;\R^N)}\frac1{2\gamma}\norm{w-v}_{L^2(0,T;\R^N)}^2 + \calG^*(w) = \left(\Id + \gamma\partial\calG^*\right)^{-1}(v)
\end{equation}
is the proximal mapping of $\calG^*$, which in Hilbert spaces coincides with the resolvent of $\partial\calG^*$. Note that the proximal mapping and
thus the Moreau--Yosida regularization of a proper and convex functional is always single-valued and Lipschitz continuous; see, e.g., \cite[Corollary~23.10]{Bauschke:2011}.

We then consider the regularized system
\begin{equation}\label{eq:formal_opt_reg}
    \left\{\begin{aligned}
            - p_\gamma &= \calF'(u_\gamma),\\
            u_\gamma &= \partial\calG^*_{\gamma}(p_\gamma).
    \end{aligned}\right.
\tag{OS$_\gamma$}
\end{equation}
Again, we will give an explicit formulation of \eqref{eq:formal_opt_reg} in the next section.
\begin{proposition}
    For each $\gamma>0$ system \eqref{eq:formal_opt_reg} admits a unique solution $(u_\gamma, p_\gamma)$.
\end{proposition}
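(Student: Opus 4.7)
The plan is to reduce \eqref{eq:formal_opt_reg} to the first-order optimality condition of a strictly convex minimization problem on $L^2(0,T;\R^N)$, for which existence and uniqueness follow by the direct method.

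The first step is to identify the primal problem implicit in the Moreau--Yosida regularization. Since $\calG$ is proper, convex and lower semicontinuous, one has $\calG^{**}=\calG$. The Moreau envelope $\calG^*_\gamma$ is the infimal convolution of $\calG^*$ with $\tfrac{1}{2\gamma}\norm{\cdot}_{L^2(0,T;\R^N)}^2$, and the standard conjugation rule for infimal convolutions (cf.\ \cite[Proposition~13.21]{Bauschke:2011}) yields
\begin{equation*}
    (\calG^*_\gamma)^{*} = \calG + \frac{\gamma}{2}\norm{\cdot}_{L^2(0,T;\R^N)}^{2}.
\end{equation*}
By the inversion formula for the subdifferential of a proper convex l.s.c.\ function, the second line of \eqref{eq:formal_opt_reg} is then equivalent to $p_\gamma \in \partial\calG(u_\gamma) + \gamma u_\gamma$. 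Substituting $p_\gamma = -\calF'(u_\gamma)$ from the first line, the system \eqref{eq:formal_opt_reg} reduces to the single inclusion
\begin{equation*}
    0 \in \calF'(u_\gamma) + \partial\calG(u_\gamma) + \gamma u_\gamma.
\end{equation*}

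The second step is to recognize this as the first-order optimality condition of
\begin{equation*}
    \min_{u\in L^2(0,T;\R^N)} \calF(u) + \calG(u) + \frac{\gamma}{2}\norm{u}_{L^2(0,T;\R^N)}^{2},
\end{equation*}
and to argue existence and uniqueness exactly as in the unregularized case. Both $\calF$ and $\calG$ are proper, convex and l.s.c.\ by the discussion preceding the earlier existence proposition, and adding the strictly convex, continuous, coercive quadratic term makes the full objective strictly convex, proper, l.s.c., and coercive. Tonelli's direct method therefore yields a unique minimizer $u_\gamma$, and setting $p_\gamma := -\calF'(u_\gamma)$ produces the unique $(u_\gamma,p_\gamma)$ solving \eqref{eq:formal_opt_reg}. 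Necessity and sufficiency of the optimality condition, as well as applicability of the subdifferential sum rule, require no further constraint qualification because $\calF$ and the quadratic penalty are Fréchet differentiable everywhere and $\calG$ is finite on all of $L^2(0,T;\R^N)$.

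I do not expect any serious obstacle. The only step that genuinely needs verification is the conjugation identity $(\calG^*_\gamma)^{*} = \calG + \tfrac{\gamma}{2}\norm{\cdot}^{2}$, which is a standard fact about the Moreau--Yosida regularization in Hilbert space. The conceptual content of the argument is simply that, under Fenchel duality, Moreau--Yosida regularization of $\calG^*$ corresponds to Tikhonov regularization of the primal problem by $\tfrac{\gamma}{2}\norm{\cdot}^{2}$, and the latter is manifestly strictly convex.
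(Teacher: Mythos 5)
Your proposal is correct and follows essentially the same route as the paper: both hinge on the conjugation identity $(\calG^*_\gamma)^* = \calG + \tfrac{\gamma}{2}\|\cdot\|_{L^2(0,T;\R^N)}^2$, the subdifferential inversion formula, and the direct method with strict convexity supplied by the Tikhonov term. The only step you leave implicit is the identification of the Yosida regularization $(\partial\calG^*)_\gamma$ with the Fréchet derivative of the Moreau envelope $\calG^*_\gamma$, which is needed before the inversion formula can be applied to the second relation and which the paper cites explicitly.
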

\begin{proof}
    Using convex analysis techniques (see, e.g., \cite[Chapter 12]{Bauschke:2011}), we obtain that
    \eqref{eq:formal_opt_reg} is the necessary optimality condition for
    \begin{equation}\label{objective_reg_con}
        \min_{u} \calF(u)+\left(\calG^*_\gamma\right)^*(u).
    \tag{P$_\gamma$}
\end{equation}
In fact, since $\calF$ is globally defined and continuous, we have the necessary optimality condition
\begin{equation}
    0 \in \partial \calF(u_\gamma)+\partial\left(\calG^*_\gamma\right)^*(u_\gamma).
\end{equation}
Setting $p_\gamma := -\calF'(u_\gamma)$ we recall again the subdifferential inversion formula from \cite[Corollary 16.24]{Bauschke:2011}.
Additionally, we note that the Yosida-regularization $\partial\calG^*_\gamma$ of the subdifferential $\partial\calG^*$
coincides with the Fréchet derivative of the Moreau-envelope $\calG^*_\gamma$ of $\calG^*$; see, e.g., \cite[Proposition 12.29]{Bauschke:2011}.
Together, these yield \eqref{eq:formal_opt_reg}.

From \cite[Remark 12.24]{Bauschke:2011}, we have the alternative characterization of the Moreau-envelope via the infimal convolution
\begin{equation}
    \calG^*_\gamma = \calG^* \mathop{\Box} \frac{1}{2\gamma}\norm{\cdot}_{L^2(0,T;\R^N)}^2,
\end{equation}
where $\R^N$ is endowed with the Euclidean norm.
Furthermore, from \cite[Theorem 15.3]{Bauschke:2011}, we obtain
\begin{equation}
    \begin{aligned}
        (\calG^*_\gamma)^* &= \left(\calG^{*} \mathop{\Box} \frac1{2\gamma} \norm{\cdot}_{L^2(0,T;\R^N)}^2\right)^*
        = \left(\left(\calG + \frac\gamma2\norm{\cdot}_{L^2(0,T;\R^N)}^2\right)^*\right)^*\\
        &= \calG + \frac\gamma2 \norm{\cdot}_{L^2(0,T;\R^N)}^2.
    \end{aligned}
\end{equation}
This implies that $(\calG^*_\gamma)^*$ is strictly convex.
Moreover, $\calG^*_\gamma \leq \calG^*$ and hence $0\leq \calG^{**}=\calG\leq (\calG^*_\gamma)^*$.
Therefore a minimizing sequence for \eqref{objective_reg_con} is necessarily
bounded, and by a weak subsequential limit argument, existence of a unique solution $u_\gamma$ to \eqref{objective_reg_con} follows.
Together with $p_\gamma =  -\calF'(u_\gamma)$, we thus obtain a unique solution $(u_\gamma,p_\gamma)$ to \eqref{eq:formal_opt_reg}.
\end{proof}

Finally, convergence as $\gamma\to 0$ can be shown as in \cite[Proposition 2.5]{CIK:2014}. This requires showing that
$\{\partial\calG^*(p_\gamma)\}_{\gamma>0}$ is uniformly bounded in $\gamma$ provided that $\{p_\gamma\}_{\gamma>0}$ is bounded.
But this follows from the explicit characterization in \eqref{eq:subdiff_N} below.
\begin{proposition}
    For any sequence $\gamma_n\to0$, the sequence $\{(u_{\gamma_n},p_{\gamma_n})\}_{n\in\N}$ converges weakly to a solution $(\bar u, \bar p)$ to \eqref{eq:formal_opt}.
\end{proposition}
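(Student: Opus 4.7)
The plan is to extract a weakly convergent subsequence of $\{(u_{\gamma_n},p_{\gamma_n})\}$ and pass to the limit in both relations of~\eqref{eq:formal_opt_reg}. The main obstacle is the nonsmooth subdifferential relation, which I intend to handle via the Fenchel identity rather than through the boundedness hint involving $\partial\calG^*(p_\gamma)$.

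First I would establish uniform boundedness of $\{u_\gamma\}$ in $L^2(0,T;\R^N)$. Since $u_\gamma$ minimizes~\eqref{objective_reg_con} with $(\calG^*_\gamma)^*(u)=\calG(u)+\tfrac{\gamma}{2}\norm{u}_{L^2(0,T;\R^N)}^2$ (as established in the preceding proof), testing against $u=0$ yields $\calF(u_\gamma)+\calG(u_\gamma)\le\calF(0)$. Because $|v|_1\ge|v|_2$ on $\R^N$ implies $\calG(u)\ge\tfrac{\alpha}{2}\norm{u}_{L^2(0,T;\R^N)}^2$, this bounds $\{u_\gamma\}$ uniformly, and consequently also $p_\gamma=-S^*(Su_\gamma-y^d)$.

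Passing to a subsequence still indexed by $\gamma_n$, I then obtain $u_{\gamma_n}\rightharpoonup\bar u$ weakly in $L^2(0,T;\R^N)$. The embedding in~\eqref{eq:range_S} together with the Aubin--Lions lemma makes $S\colon L^2(0,T;\R^N)\to L^2(\Omega_T)$ compact, so $Su_{\gamma_n}\to S\bar u$ strongly and thus $p_{\gamma_n}\to\bar p:=-S^*(S\bar u-y^d)$ strongly in $L^2(0,T;\R^N)$. This already yields the first relation of~\eqref{eq:formal_opt}, since $\calF'$ is affine continuous.

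The hard part is recovering $\bar u\in\partial\calG^*(\bar p)$. Setting $q_\gamma:=\prox_{\gamma\calG^*}(p_\gamma)=p_\gamma-\gamma u_\gamma$, the defining identity of the Moreau--Yosida regularization gives $u_\gamma\in\partial\calG^*(q_\gamma)$, equivalently the Fenchel identity
\begin{equation*}
    \calG(u_\gamma)+\calG^*(q_\gamma)=\scalprod{u_\gamma,q_\gamma}_{L^2(0,T;\R^N)}.
\end{equation*}
Boundedness of $\{u_\gamma\}$ forces $\gamma_n u_{\gamma_n}\to 0$ strongly, so $q_{\gamma_n}\to\bar p$ strongly and the right-hand side converges to $\scalprod{\bar u,\bar p}$. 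Weak lower semicontinuity of the convex functionals $\calG$ and $\calG^*$ then bounds the left-hand side from below by $\calG(\bar u)+\calG^*(\bar p)$, which combined with the Fenchel inequality $\calG(\bar u)+\calG^*(\bar p)\ge\scalprod{\bar u,\bar p}$ forces equality; hence $\bar u\in\partial\calG^*(\bar p)$ and $(\bar u,\bar p)$ solves~\eqref{eq:formal_opt}.
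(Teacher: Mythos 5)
Your proof is correct, but it follows a genuinely different route from the paper's. The paper defers to \cite[Proposition 2.5]{CIK:2014} and isolates as the only new ingredient the uniform boundedness of $\partial\calG^*(p_\gamma)$ on bounded sets of $p_\gamma$, which it reads off from the explicit pointwise formula \eqref{eq:subdiff_N} (each component being bounded by $\tfrac1\alpha\max_i|q_i|$); the limit passage itself is then inherited from the cited demiclosedness-type argument for the maximal monotone operator $\partial\calG^*$. You instead (a) obtain the a priori bound on $u_\gamma$ variationally, by testing \eqref{objective_reg_con} with $u=0$ and using the coercivity $|v|_1\ge|v|_2$ of the penalty --- this avoids the pointwise characterization entirely and would survive a change of penalty as long as coercivity holds; (b) exploit compactness of $S$ to get \emph{strong} convergence of $p_{\gamma_n}$ and hence of $q_{\gamma_n}=\prox_{\gamma_n\calG^*}(p_{\gamma_n})$; and (c) close the inclusion $\bar u\in\partial\calG^*(\bar p)$ by squeezing the Fenchel--Young inequality between the converging duality pairing and the weakly lower semicontinuous sum $\calG+\calG^*$, which is in effect a self-contained proof of the weak--strong closedness of the graph of $\partial\calG^*$ that the paper imports from the reference. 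The chain $\scalprod{\bar u,\bar p}\ge\liminf\calG(u_{\gamma_n})+\liminf\calG^*(q_{\gamma_n})\ge\calG(\bar u)+\calG^*(\bar p)\ge\scalprod{\bar u,\bar p}$ is legitimate because both functionals are nonnegative. Two small caveats, neither fatal: the compactness of $S$ requires not just that the range of $S$ lies in $W(0,T)$ but that $S$ is bounded into $W(0,T)$, which the standing parabolic regularity assumptions do supply; and, like the paper's own argument, you only obtain convergence of a subsequence (the full-sequence claim in the statement would require uniqueness of the solution to \eqref{eq:formal_opt}, which is not asserted), so this is not a defect relative to the paper.
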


\section{Switching penalty}\label{sec:multiswitching}

Here we  compute an explicit, pointwise, characterization of $\partial\calG^*$ and $\partial\calG^*_\gamma$ by exploiting the fact that for integral functionals of the form
\begin{equation}
    \calG(u) = \int_0^T g(u(t))\,dt,
\end{equation}
the Fenchel conjugate and convex subdifferential can be computed pointwise via  $g$; see, e.g., \cite[Props.~IV.1.2, IX.2.1]{Ekeland:1999a}, \cite[Prop.~16.50]{Bauschke:2011}. We thus focus on
\begin{equation}
    g:\R^N\to \R,\qquad g(v) = \frac\alpha2|v|_1^2.
\end{equation}
Because the Fenchel conjugate of half the squared norm is half the squared dual norm (see, e.g., \cite[Example 3.27]{Boyd:2004}), a scaling argument (e.g., \cite[Proposition 13.20\,(i)]{Bauschke:2011}) yields
\begin{equation}\label{eq:conj}
    g^*:\R^N\to\R,\qquad g^*(q) = \frac1{2\alpha}|q|_\infty^2 = \max_{1\leq i\leq N} \frac1{2\alpha} q_i^2.
\end{equation}

\subsection{Subdifferential}
Since $g^*$ is the maximum of finitely many convex and differentiable functions, its convex subdifferential is given by
\begin{equation}\label{eq:conj_subdiff}
    \partial g^*(q) = {\mathrm{co}} \left(\bigcup_{\{i:g^*(q)=g_i^*(q)\}}\left\{(g_i^*)'(q) \right\}\right),
\end{equation}
where ${\mathrm{co}}$ denotes the closed convex hull and $g_i^*(q) = \frac1{2\alpha}q_i^2$; see, e.g., \cite[Corollary 4.3.2]{Hiriart:2001}.
It is instructive to first consider the case $N=2$, for which only the three cases $g^*(q) = g_1(q)\neq g_2(q)$, $g^*(q) = g_2(q)\neq g_1(q)$ and $g^*(q) = g_1(q)=g_2(q)$ need to be considered. The corresponding convex hulls are then given by
\begin{equation}
    \label{eq:subdiff_2}
    \partial g^*(q) = \begin{cases}
        \left\{\left(\frac1\alpha q_1,0\right)\right\}& \text{if }|q_1|>|q_2|,\\
        \left\{\left(0,\frac1\alpha q_2\right)\right\}& \text{if }|q_1|<|q_2|,\\
        \left\{t\left(\frac1\alpha q_1,0\right)+(1-t)\left(0,\frac1\alpha q_2\right):t\in[0,1]\right\}& \text{if }|q_1|=|q_2|.
    \end{cases}
\end{equation}
For the general case, we proceed similarly by considering all possible cases. It is then straightforward to verify that the subdifferential can be given componentwise for $1\leq j\leq N$ as
\begin{equation}
    \label{eq:subdiff_N}
    [\partial g^*(q)]_j = \begin{cases}
        \left\{\frac1\alpha q_j\right\} & \text{if }|q_j| = \max_i |q_i|\text{ and } |q_j|> |q_i| \text{ for } i\neq j,\\
        \left\{0\right\} & \text{if }|q_j| < \max_i |q_i|,\\
        \left\{\frac{s_j}\alpha q_j:s_j\geq 0,\sum_{i\in\calA} s_i = 1\right\} & \text{if }|q_j| = \max_i |q_i|\text{ and } \exists i\neq j: |q_j| = |q_i|,
    \end{cases}
\end{equation}
where $\calA:=\set{j}{|q_j|=\max_i|q_i|}$. We thus obtain for $u,p\in L^2(0,T;\R^N)$ the pointwise characterization
\begin{equation}
    \partial\calG^*(p) = \set{u\in L^2(0,T;\R^N)}{u(t) \in \partial g^*(p(t)) \text{ for a.e. } t\in(0,T)}.
\end{equation}
In particular, this yields a pointwise characterization of the second relation in \eqref{eq:formal_opt}.
\begin{proposition}\label{thm:subdiff}
    The minimizer $\bar u\in L^2(0,T;\R^N)$  of \eqref{eq:problem_motiv} and $\bar p := -\calF'(\bar u)\in L^2(0,T;\R^N)$ satisfy for almost every $t\in (0,T)$ and $1\leq j\leq N$
    \begin{equation}
        \label{eq:switching_rel}
        \bar u_j(t) \in \begin{cases}
            \left\{\frac1\alpha \bar p_j(t)\right\} & \text{if }|\bar p_j(t)| = \max_i |\bar p_i(t)|,\ |\bar p_j(t)|> |\bar p_i(t)| \text{ for } i\neq j,\\
            \left\{0\right\} & \text{if }|\bar p_j(t)| < \max_i |\bar p_i(t)|,\\
            \left\{\frac{s_j}\alpha \bar p_j(t):s_j\geq 0,\sum_{i\in\calA(t)} s_i = 1\right\} & \text{if }|\bar p_j(t)| = \max_i |\bar p_i(t)|,\ \exists i\neq j: |\bar p_j(t)| = |\bar p_i(t)|,
        \end{cases}
    \end{equation}
    where $\calA(t):=\set{j}{|\bar p_j(t)| = \max_i |\bar p_i(t)|}$.
\end{proposition}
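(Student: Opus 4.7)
The proof is essentially a direct specialization of the machinery already assembled in the section, so the plan is to wire the pieces together carefully rather than to build new tools.

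First, I would invoke the optimality system \eqref{eq:formal_opt}, which, combined with $\bar p = -\calF'(\bar u)$, reduces the statement to showing that the inclusion $\bar u \in \partial\calG^*(\bar p)$ is equivalent, in a pointwise sense, to the case distinction \eqref{eq:switching_rel}. Since $\calG$ is an integral functional with integrand $g(v) = \frac{\alpha}{2}|v|_1^2$ (which is a proper, convex, continuous normal integrand on $\R^N$), the Fenchel conjugate $\calG^*$ is likewise an integral functional, with integrand $g^*$ given by \eqref{eq:conj}. By the pointwise characterization of the subdifferential of such integral functionals (the references cited just before \eqref{eq:conj}: Propositions IV.1.2 and IX.2.1 in Ekeland--Témam, or Proposition 16.50 in Bauschke--Combettes), the inclusion $\bar u \in \partial\calG^*(\bar p)$ holds if and only if $\bar u(t) \in \partial g^*(\bar p(t))$ for almost every $t \in (0,T)$.

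Second, I would apply the explicit componentwise description \eqref{eq:subdiff_N} of $\partial g^*(q)$, derived from the convex-hull formula \eqref{eq:conj_subdiff} by case analysis on the active set $\calA = \{j : |q_j| = \max_i |q_i|\}$. Setting $q = \bar p(t)$ and $\calA(t) = \set{j}{|\bar p_j(t)| = \max_i |\bar p_i(t)|}$ directly translates the three cases in \eqref{eq:subdiff_N} into the three cases in \eqref{eq:switching_rel}. In particular, the singleton on the active component in the strict case comes from $(g_j^*)'(q) = \frac{1}{\alpha} q_j e_j$, the zero on inactive components reflects that $\partial g^*(q)$ lies in the span of the active coordinate vectors, and the convex-hull case reproduces the parameterization $\frac{s_j}{\alpha}\bar p_j(t)$ with $s_j \geq 0$ and $\sum_{i\in\calA(t)} s_i = 1$.

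The only mildly delicate step is the appeal to the pointwise characterization of $\partial\calG^*$: one should verify that $g^*$ is a normal convex integrand (which is immediate since $g^*$ is continuous and finite-valued), that $\calG^*$ is proper and finite on $L^2(0,T;\R^N)$ (which follows from the bound $g^*(q) \leq \frac{1}{2\alpha}|q|_2^2$), and that $\bar u$ is $L^2$-integrable so as to constitute an admissible selection. All of these are routine given the setting of the paper. No additional work is needed beyond citing the integral-functional theorem and reading off \eqref{eq:switching_rel} from \eqref{eq:subdiff_N}.
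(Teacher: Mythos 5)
Your argument is exactly the paper's: combine the optimality system \eqref{eq:formal_opt} with the pointwise subdifferential characterization of the integral functional $\calG^*$ and read off the three cases from \eqref{eq:subdiff_N}; the extra checks you mention (normality of the integrand, finiteness of $\calG^*$) are the routine hypotheses behind the cited results and are implicit in the paper. Nothing to correct.
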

From this proposition, the desired switching property follows: Outside of the singular arc
\begin{equation}
    \calS:=\set{t\in(0,T)}{|\calA(t)|>1},
\end{equation}
we have $\bar u_{j_1}\bar u_{j_2} = 0$ for all $j_1\neq j_2$.
In particular, if $\calS$ has Lebesgue measure zero, $\bar u$ exhibits perfect switching, i.e., $\bar u_{j_1}\bar u_{j_2} = 0$ almost everywhere.
Under the stronger assumption that $|\bar p_{j_1}(t)| \neq |\bar p_{j_2}(t)|$ for all $j_1\neq j_2$ and almost all $t\in (0,T)$, a similar relation for exact null controls was given in \cite[equations (2.36--2.38)]{Zuazua:2011}.

\begin{remark}
    It is difficult to give general conditions for the optimal control to be perfectly switching; instead one would have to exploit properties of the specific (adjoint) state equation.
    For example, in the case of the one-dimensional heat equation, one could use the fact that solutions are real-analytic with respect to time to argue that $|\bar p_i(t)| = |\bar p_j(t)|$ for all $t\in O$ for an open set $O\subset(0,T)$ is only possible if $\bar p_i(t) = \bar p_j(t) = 0$ for all $t\in O$; cf.~\cite[Section 6.1]{Zuazua:2011}. 
\end{remark}

\subsection{Proximal mapping}

We next characterize the proximal mapping $\prox_{\gamma g^*}(p)$ via the resolvent $(\Id+\gamma\partial g^*)^{-1}(p)$ by proceeding similarly to \cite[Section 3.3]{CIK:2014}; see also \cite{Kunisch:2008a}.
\begin{proposition}\label{prop_proximal_mapping}
    Let the components of $v\in \R^N$ be sorted by decreasing magnitude and let $d$ be the smallest index for which
    \begin{equation}\label{eq:prox_d}
        |v_{d+1}| <  \frac\alpha{d\alpha+\gamma}\sum_{i=1}^{d} |v_i|
    \end{equation}
    holds; if no such index exists, let $d=N$. The explicit form of the proximal mapping is then given componentwise for $1\leq j\leq N$ by
    \begin{align}
        [\prox_{\gamma g^*}(v)]_j = \begin{cases}
            \sign(v_j) \tfrac\alpha{d\alpha+\gamma}\sum_{i=1}^d |v_i| & \text{if }j\leq d,\\
            v_j & \text{if }j>d.
        \end{cases}
    \end{align}
\end{proposition}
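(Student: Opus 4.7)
The plan is to characterize $u := \prox_{\gamma g^*}(v)$ through the resolvent identity $v \in (\Id + \gamma\partial g^*)(u)$, i.e., to find $u$ and some $w \in \partial g^*(u)$ with $v = u + \gamma w$, then exploit the explicit componentwise formula \eqref{eq:subdiff_N} for $\partial g^*$. The structure of any admissible $w$ is completely governed by the active set $\calA := \{i : |u_i| = M\}$ with $M := \max_k|u_k|$: outside $\calA$ one has $w_j = 0$, while inside $\calA$ one has $w_j = s_j u_j/\alpha$ for nonnegative weights $s_j$ summing to one.

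The first step is to read off the inactive components: for $j \notin \calA$, the relation $v = u + \gamma w$ yields $v_j = u_j$ and hence $|v_j| < M$. For $j \in \calA$ the same relation gives $v_j = (1 + \gamma s_j/\alpha)u_j$, whence $\sign v_j = \sign u_j$ and $|v_j| = (1 + \gamma s_j/\alpha) M \geq M$. It follows that $\calA$ consists exactly of the $d := |\calA|$ indices of largest magnitude of $v$, so after sorting $\calA = \{1,\dots,d\}$. Summing $|v_j| = (1 + \gamma s_j/\alpha) M$ over $j \in \calA$ and using $\sum_i s_i = 1$ produces
\begin{equation*}
    M = \frac{\alpha}{d\alpha + \gamma}\sum_{i=1}^d|v_i|,
\end{equation*}
and the asserted formula drops out from $u_j = \sign(v_j) M$ for $j \leq d$ together with $u_j = v_j$ for $j > d$.

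What remains is to identify $d$. The inactivity condition $|v_j| < M$ for $j > d$ applied at the largest inactive index $j = d+1$ is exactly \eqref{eq:prox_d}. The delicate point---and what I expect to be the only real obstacle---is showing that the \emph{smallest} index $d$ satisfying \eqref{eq:prox_d} is also compatible with the activity condition $|v_d| \geq M$ required for $s_d \geq 0$. For this I would compare $M$ with the analogue $M'$ obtained by replacing $d$ with $d-1$. A short manipulation of $(d\alpha+\gamma)M = ((d-1)\alpha + \gamma)M' + \alpha|v_d|$ yields
\begin{equation*}
    (d\alpha + \gamma)(|v_d| - M) = ((d-1)\alpha + \gamma)(|v_d| - M'),
\end{equation*}
so both sides carry the same sign. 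Minimality of $d$ means \eqref{eq:prox_d} fails at $d-1$, i.e., $|v_d| \geq M'$, and hence $|v_d| \geq M$. The base case $d=1$ is immediate from $M \leq |v_1|$, and the fallback $d = N$ (where \eqref{eq:prox_d} never triggers) requires no further check beyond the same induction. Uniqueness of $u$ is automatic from the single-valuedness of the proximal mapping, which completes the argument.
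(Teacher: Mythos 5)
Your argument is correct, and although it starts from the same two ingredients as the paper's proof---the resolvent inclusion $v\in(\Id+\gamma\partial g^*)(u)$ and the explicit componentwise subdifferential \eqref{eq:subdiff_N}---it resolves the statement by a genuinely different route. The paper enumerates the possible tie structures $|w_1|=\dots=|w_j|>|w_{j+1}|$ of the output, solves the cases $j=1,2,3$ separately, declares the rest ``analogous,'' and obtains the minimality of $d$ only indirectly, by noting that the condition attached to a smaller index would force a contradictory case. You instead treat all $d$ uniformly through the active set $\calA$ of $u$, and---this is the real added value---you verify that the \emph{minimal} $d$ from \eqref{eq:prox_d} produces admissible weights $s_j\geq 0$ via the identity $(d\alpha+\gamma)(|v_d|-M)=((d-1)\alpha+\gamma)(|v_d|-M')$, which converts the failure of \eqref{eq:prox_d} at $d-1$ directly into $|v_d|\geq M$; single-valuedness of the proximal mapping then closes the argument for every $N$ and $d$, including $d=N$, without any appeal to ``analogy.'' What your approach buys is a complete, induction-free-of-cases proof and an explicit monotonicity relation between consecutive candidate values $M$; what the paper's approach buys is that each case comes packaged with the exact region of $v$'s on which it is active, which is convenient when assembling the Newton derivative later. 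Two small points to make explicit in a polished write-up: for $j>d+1$ the inactivity $|v_j|<M$ follows from the sorting via $|v_j|\leq|v_{d+1}|$, and the degenerate case $v=0$ (where $M=0$) should be dispatched before dividing by $M$ to define the weights $s_j$.
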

\begin{proof}
    Let $w:=(\mathrm{Id}+\gamma\partial g^*)^{-1}(v)$.
    This is characterized by the subdifferential inclusion
    \begin{equation}\label{eq:resolvent}
        v\in (\mathrm{Id}+\gamma\partial g^*)(w) = \{w\}+\gamma\partial g^*(w).
    \end{equation}
    First, we observe that \eqref{eq:resolvent} implies that $\sign(v_i) = \sign(w_i)$.
    We next follow the case discrimination in the characterization of the subdifferential, where we
    assume that the components of $w$ are ordered by magnitude, i.e., $|w_1| \geq  |w_2| \geq \dots \geq |w_N|$
    (the remaining cases following by permutation).
    \begin{enumerate}[(i)]
        \item $|w_1|>|w_2|$:
            In this case, all components of the subdifferentials are single-valued; solving for each $w_i$ in \eqref{eq:resolvent} yields
            \begin{equation}
                w_i = \begin{cases} \frac\alpha{\alpha+\gamma} v_i & \text{if }i=1,\\
                    v_i & \text{if }i>1.
                \end{cases}
            \end{equation}
            The assumption $|w_1|>|w_2|$ is then equivalent to the condition
            \begin{equation}
                |v_1|>\left(1+\tfrac\gamma\alpha\right)|v_2|.
            \end{equation}
        \item $|w_1|=|w_2|>|w_3|$: Using that $\sign(w_i)=\sign(v_i)$, this implies that $w_1 = \sign(v_1)\bar w$ and $w_2 = \sign(v_2)\bar w$ for some $\bar w>0$.
            Inserting this into the set-valued case of the subdifferential, we deduce that for some $t\in[0,1]$,
            \begin{align}
                \sign(v_1)|v_1| &= \left(1+t\tfrac\gamma\alpha\right)\sign(v_1)\bar w,\\
                \sign(v_2)|v_2| &= \left(1+(1-t)\tfrac\gamma\alpha\right)\sign(v_2)\bar w.
            \end{align}
            Dividing by the $\sign(v_i)$ on both sides and adding yields
            \begin{equation}
                |v_1| + |v_2| = \left(2+\tfrac\gamma\alpha\right)\bar w,
            \end{equation}
            from which we obtain
            \begin{equation}
                w_i = \begin{cases}
                    \sign(v_i)\tfrac\alpha{2\alpha+\gamma}(|v_1|+|v_2|), &\text{if } i \leq 2,\\
                    v_i &\text{if } i>2,
                \end{cases}
            \end{equation}
            the single-valued cases for $i>2$ following as before.

            Inserting these values into $|w_2|>|w_3|$ then yields the condition
            \begin{equation}
                |v_1|+|v_2| > \left(2+\tfrac\gamma\alpha\right)|v_3|.
            \end{equation}
            Note that in addition,
            \begin{equation}
                |v_1| \leq \left(1+\tfrac\gamma\alpha\right)|v_2|
            \end{equation}
            must hold since due to case (i), the converse is equivalent to $|w_1|>|w_2|$, in contradiction to the assumption.
        \item  $|w_1|=|w_2|=|w_3|>|w_4|$:
            As before, the chain of equalities is equivalent to
            \begin{align}
                |v_1| &= \left(1+t_1\tfrac\gamma\alpha\right)\bar w,\\
                |v_2| &= \left(1+t_2\tfrac\gamma\alpha\right)\bar w,\\
                |v_3| &= \left(1+(1-t_1-t_2)\tfrac\gamma\alpha\right)\bar w
            \end{align}
            for some $t_1,t_2$ with $t_1+t_2\in(0,1)$. Adding these equations yields that
            \begin{equation}
                w_i = \begin{cases}
                    \sign(v_i)\tfrac\alpha{3\alpha+\gamma}(|v_1|+|v_2|+|v_3|), &\text{if } i \leq 3,\\
                    v_i & \text{if }i>3,
                \end{cases}
            \end{equation}
            as well as the condition
            \begin{equation}
                |v_1|+|v_2|+|v_3| > \left(3+\tfrac\gamma\alpha\right)|v_4|.
            \end{equation}
            Again,
            \begin{equation}
                |v_1| \leq \left(1+\tfrac\gamma\alpha\right)|v_2|,
                \qquad\text{and}\qquad
                |v_1|+|v_2| \leq \left(2+\tfrac\gamma\alpha\right)|v_3|
            \end{equation}
            must hold since otherwise we obtain a contradiction in the case distinction.
        \item The remaining cases $|w_1|=\dots=|w_j|>|w_{j+1}|$ for $j=4,\dots,N$ follow analogously,
            establishing the desired characterization of the proximal mapping.
            \qedhere
    \end{enumerate}
\end{proof}

It is now straight-forward to give a pointwise characterization of the Moreau--Yosida regularization $\partial\calG^*_\gamma$ via
\begin{equation}
    [\partial\calG^*_\gamma(p)](t) = \partial g^*_\gamma (p(t)) = \frac1\gamma \left(p(t)- \prox_{\gamma g^*}(p(t))\right) \quad \text{for a.e. }t\in (0,T).
\end{equation}
We point out that for every $t\in (0,T)$, the number $d$ in \cref{prop_proximal_mapping} is the number of nonzero components in $u_\gamma(t)=[\partial\calG^*_\gamma(p)](t)$.
In particular for the case $d=1$, the explicit form of $\partial g^*_\gamma$ is given componentwise as
\begin{equation}
    \label{eq:h_gamma_d1}
    [\partial g^*_\gamma(q)]_j =
    \begin{cases}
        \frac1{\alpha+\gamma} q_j &\text{if } |q_j| = \max_i |q_i|\text{ and } |q_j|>\left(1+\tfrac\gamma\alpha\right) |q_i|, i\neq j,\\
        0 &\text{if } |q_j| < \max_i |q_i|.
    \end{cases}
\end{equation}
(Note that the missing case $|q_j|\leq \left(1+\tfrac\gamma\alpha\right) |q_i|$ is excluded by the assumption that $d=1$.)
Thus if for some $\gamma>0$, the solution $(u_\gamma,p_\gamma)$ to \eqref{eq:formal_opt_reg} is such that $d=1$ for almost every $t\in(0,T)$, the regularized control is perfectly switching.

\subsection{Relation to other approaches for switching control}\label{sec:multiswitching:relation}

In this section, we address the relation of the proposed approach to alternative $\ell^1$-type penalizations as well as to the $\ell^0$ formulation of \cite{CIK:2014}.
For this purpose, we recall the original introduction of $g$ as a combination of a quadratic control cost and an $\ell^1$-penalization of the switching constraint, i.e., as
\begin{equation}
    g_\beta(v) := \frac{\alpha}2 |v|_2^2 + {\beta} \sum_{\stackrel{i,j=1}{i< j}}^N |v_iv_j|.
\end{equation}
We now compare this formulation with other approaches, setting $N=2$ for the sake of simplicity.

First, we address the choice $\beta\neq \alpha$.
For $\beta<\alpha$, we can introduce $\gamma:=\alpha-\beta>0$ and rewrite $g_\beta$ as
\begin{equation}
    g_\beta(v) = \frac{\beta}2 |v|_2^2 + \beta |v_1v_2| +  \frac{\gamma}2 |v|_2^2 = (g^*_\gamma)^*(v)
\end{equation}
(with $\beta$ in place of $\alpha$ in the definition of $g$). The optimality system in this case is therefore equivalent to \eqref{eq:formal_opt_reg}.
In case $\beta > \alpha$, we can reformulate again to
\begin{equation}
    g_\beta(v) = \frac{\alpha-\beta}2 |v|_2^2 + \frac\beta2 |v|_1^2,
\end{equation}
which is obviously nonconvex. Computing the Fenchel conjugate leads to
\begin{equation}
    g_\beta^*(q) =
    \begin{cases}
        \frac1{2\alpha} q_1^2 & \text{if } |q_1|\geq |q_2|,\\
        \frac1{2\alpha} q_2^2 & \text{if } |q_1|\leq |q_2|,
    \end{cases}
\end{equation}
which coincides with \eqref{eq:conj}. 
We thus have that $g_\beta^*=g^*$, from which it follows that
\begin{equation}\label{eq:switching_relaxation}
    g = g^{**} = (g^*)^* = (g_\beta^*)^* = g_\beta^{**},
\end{equation}
i.e., $g$ is the lower convex envelope of the nonconvex function $g_\beta$.

\bigskip

We can also compare to the nonconvex $\ell^0$-switching formulation from \cite{CIK:2014},
\begin{equation}
    \label{eq:switching_l0}
    g_0(v) = \frac{\alpha}2 |v|_2^2 + \beta |v_1v_2|_0
\end{equation}
(where $|s|_0 = 0$ if $s=0$ and $1$ otherwise), whose Fenchel conjugate is given by
\begin{equation}
    \label{eq:switching_l0_subdiff}
    g_0^*(q) =
    \begin{cases}
        \frac1{2\alpha} q_1^2 & \text{if } |q_1|\geq |q_2| \text{ and }|q_2|\leq \sqrt{2\alpha\beta},\\
        \frac1{2\alpha} q_2^2 & \text{if } |q_1|\leq |q_2| \text{ and }|q_1|\leq \sqrt{2\alpha\beta},\\
        \frac1{2\alpha} (q_1^2+q_2^2)-\beta & \text{if } |q_1|,|q_2| \geq \sqrt{2\alpha\beta}.
    \end{cases}
\end{equation}
For fixed $q$ and $\beta$ sufficiently large, this coincides with \eqref{eq:subdiff_2}, since the third case
(corresponding to the free arc, where both control components are active) is never attained.
Therefore, the functional $g$ can be interpreted
as the convex relaxation of $g_0$ in the limit as $\beta\to\infty$, in the following sense: Taking the formal limit $\beta\to\infty$ in \eqref{eq:switching_l0},
we arrive at the (still nonconvex) constrained functional
\begin{equation}
    g_\infty(v) := \frac\alpha2 |v|_2^2 + \delta_{\{v:v_1v_2 = 0\}}(v) :=
    \begin{cases}
        \frac\alpha2 |v|_2^2 & \text{if }v_1v_2 = 0,\\
        \infty &\text{else,}
    \end{cases}
\end{equation}
where the switching property $v_iv_j =0$ for all $i\neq j$ is included as an explicit constraint.
Proceeding as in \cite[\S{}\,3.2]{CIK:2014}, we find that the Fenchel conjugate of $g_\infty$ is given by the first two cases of \eqref{eq:switching_l0_subdiff}.
As in the case of $g_\beta$, this implies that $g$ is the lower convex envelope of $g_\infty$.

\bigskip

We conclude that our choice of $g$ is a natural convex formulation for promoting switching controls. Convexification naturally entails that switching cannot be guaranteed for arbitrarily small choices of $\alpha$ but is crucial for an efficient numerical solution as discussed in the next section.

\section{Numerical solution}\label{sec:newton}

We apply a semismooth Newton method (see, e.g., \cite{Kunisch:2008a,Ulbrich:2011}) to \eqref{eq:formal_opt_reg}.
By eliminating $u_\gamma$ via the second relation of \eqref{eq:formal_opt_reg} and introducing the state $y_\gamma:=S(u_\gamma)\in Y:=L^2(\Omega_T)$, we arrive at the equivalent system
\begin{equation}\label{eq:optimality_red}
    \left\{\begin{aligned}
            y_\gamma &= S H_\gamma(p_\gamma),\\
            p_\gamma &= -S^* (y_\gamma-y^d),
    \end{aligned}\right.
\end{equation}
where we set $H_\gamma := \partial\calG_\gamma^*$.
We similarly set $h_\gamma := \partial g_\gamma^*$ such that
\begin{equation}
    [H_\gamma(p)](t) = h_\gamma(p(t)) \qquad\text{for a.e. } t\in (0,T).
\end{equation}
Exactly as in \cite[\S\,5]{CIK:2014}, one argues that $h_\gamma:\R^N\to\R^N$ is semismooth, and a Newton derivative $D_N h_\gamma(q)\in\R^{N\times N}$ is obtained by choosing
any element of the Clarke derivative $\partial_C h_\gamma(q)$ (which for the piecewise differentiable function $h_\gamma$ is given by the convex hull of the piecewise derivatives).
Here we take, assuming again for simplicity that the components of $q$ are sorted descending by magnitude, componentwise for $1\leq i,j\leq N$
\begin{equation}
    [D_N h_\gamma(q)]_{ji} =
    \begin{cases}
        \frac{(d-1)\alpha+\gamma}{\gamma(d\alpha+\gamma)} & \text{if } j=i\leq d,\\
        -\frac{\alpha}{\gamma(d\alpha+\gamma)}\sign(q_jq_i) & \text{if } j\leq d,\ i\leq d,\ i\neq j,\\
        0 &\text{if } j>d \text{ or } i>d,
    \end{cases}
\end{equation}
where $d$ is as in \cref{prop_proximal_mapping}. The Newton derivative for arbitrary $q\in\R^N$ can be obtained from this by appropriate permutation of rows and columns. Note that $D_N h_\gamma(q)$ is symmetric.
Furthermore, we point out that evaluation of the proximal mapping amounts to sorting for each $t\in(0,T)$ the vector $p(t)\in\R^N$,
which can be carried out in $\mathcal{O}(N\log N)$ operations,
and testing the $N-1$ conditions in \eqref{eq:prox_d} for the sorted vector, which requires $\mathcal{O}(N)$ operations.
Hence, evaluating $h_\gamma$ and assembling the Newton derivative $D_N h_\gamma$ can be performed in $\mathcal{O}(N\log N)$ operations,
thus avoiding the exponential complexity involved in computing switching points between any two (or more) control components; see, e.g., \cite{Iftime:2009}. A different relaxation approach for mixed-integer optimal control that also avoids exponential complexity was presented in \cite{Hante:2013}.

Since $h_\gamma$ is semismooth from $\R^N$ to $\R^N$, the corresponding superposition operator $H_\gamma$ is semismooth from $V\hookrightarrow L^r(0,T;\R^N)$ for $r>2$ to $L^2(0,T;\R^N)$, with Newton derivative at $p$ in direction $\delta p$ given by
\begin{equation}
    [D_N H_\gamma (p) \delta p](t) = D_N h_\gamma(p(t))\delta p(t) \qquad\text{for a.e. }t\in (0,1).
\end{equation}
Considering the system~\eqref{eq:optimality_red} as an operator equation from $Y\times V$ to $Y\times V$, a semismooth Newton step for its solution
thus consists of computing $(\delta y,\delta p)\in Y\times V$ for given $(y^k,p^k)\in Y\times V$ such that
\begin{equation}\label{eq:newton_step}
    \left\{\begin{aligned}
            \delta y - S_0 D_NH_\gamma(p^k)\delta p &= -y^k+ S H_\gamma(p^k) ,\\
            \delta p + S^*\delta y &= -p^k - S^* (y^k-y^d),
    \end{aligned}\right.
\end{equation}
and setting $y^{k+1} = y^k +\delta y$ and $p^{k+1} = p^k + \delta p$. Here, $S_{0}$ denotes the solution operator of the state equation with homogeneous initial and boundary conditions.
As in \cite[Proposition 5.2]{CIK:2014}, one shows uniformly bounded invertibility of each Newton step, from which  locally superlinear convergence of the semismooth Newton method follows.

The disadvantage of \eqref{eq:newton_step} is the forward--backward structure in time, which prevents using time-stepping methods for the application of $S_0$ and $S^*$.
For the practical implementation, we therefore eliminate $\delta y$ via the first relation of \eqref{eq:newton_step} as well as $y^k$ via the first relation of \eqref{eq:optimality_red} to obtain the reduced semismooth Newton
step
\begin{equation}
    \label{eq:ssn_red}
    \delta p+S^*S_{0} D_N H_\gamma(p^k)\delta p =-\left(p^k+ S^*(S H_\gamma(p^k)-y^d)\right),
\end{equation}
which allows for time-stepping and matrix-free Krylov methods.
The linear operator on the left-hand side becomes self-adjoint using the
inner product generated by $D_NH_\gamma(p^k)$, and hence we apply a matrix-free CG method for the solution of \eqref{eq:ssn_red} using this inner product; see  \cite[Chapter 3.4.1 and Remark 3.9\,ii)]{PieperDiss}.
We point out that monitoring the convergence of the CG method is still based on the standard Euclidean norm of the residual.

An alternative to \eqref{eq:ssn_red} is a reduction to $\delta y$, which would lead to a reduced Newton matrix that is already self-adjoint with respect to the standard inner product.
We do not follow this approach, because it would lead to a significantly higher dimension of the Newton system (full state space versus control space).
In addition, the iterates would be infeasible, since in this case the state equation is only satisfied for a solution of the reduced optimality system.

To account for the local convergence of Newton methods, we embed the semismooth Newton method within a homotopy strategy for $\gamma$,
where we start with a large $\gamma$ which is successively reduced, taking the previous solution as starting point.
Furthermore, we include a backtracking line search based on the residual norm to improve robustness.

\section{Numerical examples}\label{sec:results}

We now illustrate the behavior of the proposed approach and the structure of the resulting controls using a family of numerical examples.
We consider the heat equation in two dimensions, i.e., $y=S(u)$ solves
\begin{equation}
    y_t - \Delta y = \sum_{i=1}^N\chi_{\omega_i}(x) u_i(t) \quad \text{on }\Omega_T,
\end{equation}
with $T=10$ and $\Omega=(-1,1)^2$, and zero initial and Neumann boundary conditions.
We choose circular control domains $\omega_i = B_{0.1}(x_i)$ with centers
$x_i=(\cos(\varphi_i),\sin(\varphi_i))/\sqrt{2}$ regularly distributed along a circle,
where $\varphi_i=\pi/4+2\pi(i-1)/N$, while the observation domain is $\omobs=B_{0.5}(0)$; see \cref{fig:problem_setting}.
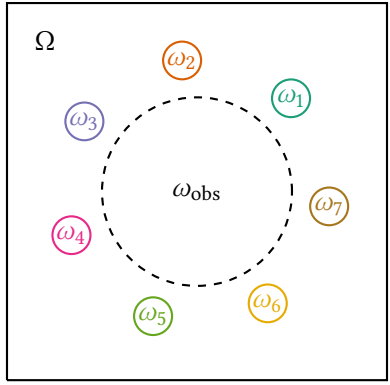
\begin{figure}[t]
    \centering
    \begin{tikzpicture}[scale=2.5]
        \draw [black,thick] (-1,-1)--(-1,1)--(1,1)--(1,-1)--(-1,-1);
        \draw [thick,dashed] (0,0) circle [radius=0.5] node (0,0) {$\omobs$};
        \draw [color=Dark2-A,thick] (45:0.70) circle [radius=0.1] node {$\omega_1$};
        \draw [color=Dark2-B,thick] (45+360/7:0.7) circle [radius=0.1] node {$\omega_2$};
        \draw [color=Dark2-C,thick] (45+2*360/7:0.7) circle [radius=0.1] node {$\omega_3$};
        \draw [color=Dark2-D,thick] (45+3*360/7:0.7) circle [radius=0.1] node {$\omega_4$};
        \draw [color=Dark2-E,thick] (45+4*360/7:0.7) circle [radius=0.1] node {$\omega_5$};
        \draw [color=Dark2-F,thick] (45+5*360/7:0.7) circle [radius=0.1] node {$\omega_6$};
        \draw [color=Dark2-G,thick] (45+6*360/7:0.7) circle [radius=0.1] node {$\omega_7$};
        \draw (-0.8,0.8) node {$\Omega$};
    \end{tikzpicture}
    \caption{Problem setting for $N=7$ control components}\label{fig:problem_setting}
\end{figure}
The desired state $y^d\in L^2(0,T;L^2(\omobs))$ is set to
\begin{align}
    y^d=\sum_{i=1}^N \cos(i+t) \sin^2 \left(2\pi \frac{t}{T}\right) |x-x_i|^2.
\end{align}
The corresponding optimal controls are computed with the semismooth Newton method \eqref{eq:ssn_red},
stopping at a relative tolerance of $10^{-6}$ in the residual
$F_k=\|F(p^k)\|_{L^2(0,T;\R^N)}$, where $F(p)$ denotes the right-hand side of \eqref{eq:ssn_red}.
The backtracking line search starts with a step size $1$ and uses a reduction factor of $1/2$.
The CG method is stopped with a relative tolerance of $10^{-6}$ or after a maximal number of $50$ iterations is reached.
We initialize with $p^0=0$.
The homotopy loop starts from $\gamma=10^{-2}$, reducing $\gamma$ by a factor of $10$ as long as the Newton method converges in a prescribed number of maximal $30$ iterations; the minimal allowed $\gamma$ is $10^{-12}$.
We denote by $\lastgamma$ the resulting smallest $\gamma$ for which the Newton method converged.
Below, we also report on the number $\tau_1$ of control intervals that exhibit perfect switching, i.e., in which at most one control of $N$ is active.
Similarly, we define $\tau_j$ for $j>1$ as the number of time points with $d=j$ in the proximal mapping.

For the spatial discretization, piecewise linear finite elements on an unstructured mesh of $725$ triangular elements (maximal diameter $0.1$) are chosen.
The time discretization is carried out with the cG($1$) Petrov--Galerkin method (which corresponds to the Crank--Nicolson method) and
$201$ equidistant time points. The discrete control is chosen as piecewise constant, i.e., there are  $200$ degrees of freedom per control component.
An implementation of the presented approach in \textsc{Matlab} can be downloaded from \url{https://github.com/clason/multiswitchingcontrol}.

\Cref{fig:other_N:3}, \cref{fig:other_N:5} and \cref{fig:heat2D_N7:1} show the optimal controls for $\alpha=10^{-1}$ and $N=3$, $N=5$ and $N=7$,
respectively (where here and below only active (i.e., nonzero) control components are shown; components that are never active are not listed in the legend, i.e., $u_2$, $u_4$, $u_7$ in these cases, respectively).
All three configurations terminated with $\lastgamma=10^{-12}$, and the optimal controls exhibit perfect switching.
Note that while all control components are discontinuous, the norm $\|u(t)\|_{l^1(\R^N)}$ (which corresponds to the absolute value of the curve plotted in the figure) is continuous.
This is expected since the $L^2$ norm of this quantity appears as a penalty in Problem~\eqref{eq:problem_motiv}.
\pgfplotsset{cycle list/Dark2-7}
\begin{figure}
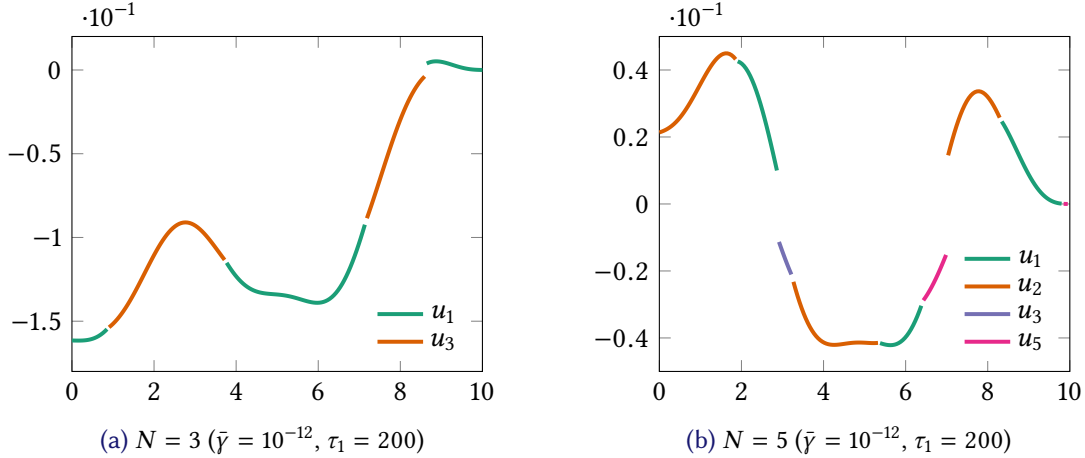

    \centering
    \begin{subfigure}[t]{0.475\textwidth}
        {\input{heat2D_201_h0.1_alpha1e-01_Nf3.tex}}
        \caption{$N=3$ ($\lastgamma=10^{-12}$, $\tau_1=200$)\label{fig:other_N:3}}%
    \end{subfigure}
    \hfill
    \begin{subfigure}[t]{0.475\textwidth}
        {\input{heat2D_201_h0.1_alpha1e-01_Nf5.tex}}
        \caption{$N=5$ ($\lastgamma=10^{-12}$, $\tau_1=200$)\label{fig:other_N:5}}%
    \end{subfigure}
    \caption{Optimal controls for $\alpha=10^{-1}$} \label{fig:other_N}
\end{figure}

Controls for $N=7$ and different values of $\alpha$ are depicted in \cref{fig:heat2D_N7}.
Since $\alpha$ determines the strength of the switching penalty, it is not surprising that the controls are perfectly switching only for sufficiently large $\alpha$, as seen in \cref{fig:heat2D_N7:1} and \cref{fig:heat2D_N7:2}.
For moderate values of $\alpha$ such as $\alpha= 10^{-3}$, there is a single isolated control point where two components are active at the same time ($u_3$ and $u_7$ in \cref{fig:heat2D_N7:3}).
For much smaller $\alpha=10^{-5}$, perfect switching is mostly lost ($\tau_1=17$); however, the control still exhibits a switching property, since in most intervals only two and never more than three components are active ($\tau_2=140$, $\tau_3=43$).
Due to the penalization of the $L^2$ norm in time of the $\ell_1$ norm of the controls, we also observe that the magnitude of the plotted control envelope increases with decreasing $\alpha$ while also becoming less regular with respect to time.
For those controls exhibiting perfect switching, the number of switching points (i.e., of time points $\tau\in (0,T)$ where $\arg\max_j|\bar p_j(s)|\neq\arg\max_j|\bar p_j(t)|$ for $s<\tau<t$ close to $\tau$) is relatively independent of $\alpha$, rising 
only slightly from $10$ points for $\alpha=10^{-1}$ to $11$ and $12$ points for $\alpha = 10^{-2}$ and $\alpha = 10^{-3}$, respectively.
\begin{figure}[t]
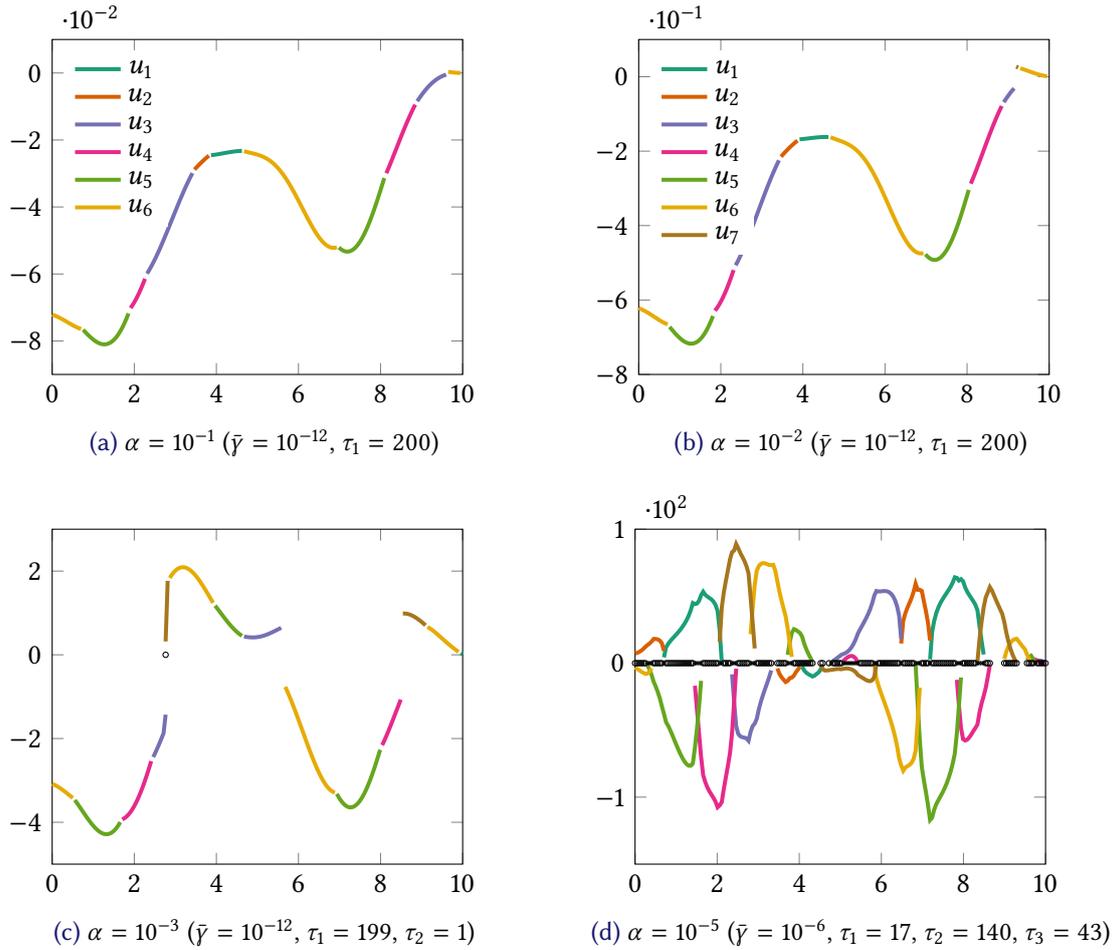

    \centering
    \begin{subfigure}[t]{0.475\textwidth}
        {\input{heat2D_201_h0.1_alpha1e-01_Nf7.tex}}\hfill
        \caption{$\alpha=10^{-1}$ ($\lastgamma=10^{-12}$, $\tau_1=200$)\label{fig:heat2D_N7:1}}%
    \end{subfigure}
    \hfill
    \begin{subfigure}[t]{0.475\textwidth}
        {\input{heat2D_201_h0.1_alpha1e-02_Nf7.tex}}
        \caption{$\alpha=10^{-2}$ ($\lastgamma=10^{-12}$, $\tau_1=200$)\label{fig:heat2D_N7:2}}%
    \end{subfigure}
    
    \bigskip

    \begin{subfigure}[b]{0.475\textwidth}
        {\input{heat2D_201_h0.1_alpha1e-03_Nf7.tex}}\hfill
        \caption{$\alpha=10^{-3}$ ($\lastgamma=10^{-12}$, $\tau_1=199$, $\tau_2=1$)\label{fig:heat2D_N7:3}}%
    \end{subfigure}
    \hfill
    \begin{subfigure}[b]{0.475\textwidth}
        {\input{heat2D_201_h0.1_alpha1e-05_Nf7.tex}}
        \caption{$\alpha=10^{-5}$ ($\lastgamma=10^{-6}$, $\tau_1=17$, $\tau_2=140$, $\tau_3=43$)\label{fig:heat2D_N7:5}}%
    \end{subfigure}
    \caption{Dependence of the controls on $\alpha$ for $N=7$. Control intervals without perfect switching are marked on the $t$-axis} \label{fig:heat2D_N7}
\end{figure}

The dependence of the switching structure on $\alpha$ is illustrated quantitatively in \cref{tab:heat2D_alpha}, where we also report on the effects on the numerical algorithm.
As can be seen, for moderate values of $\alpha$, the homotopy terminates successfully with $\lastgamma=10^{-12}$, and both the number of Newton and CG iterations in the final homotopy and Newton step, respectively, increase only slightly.
For smaller $\alpha$ and $\gamma$, the Newton system becomes increasingly difficult to solve, and the inner CG and Newton iterations terminate due to the maximum number of iterations having been reached, which leads to early termination of the homotopy strategy.
\begin{table}[p]
    \caption{Dependence on $\alpha$ for $N=7$ ($\lastgamma$ is the last successful regularization parameter, \#{}SSN is the number of semismooth Newton steps and \#{}CG is the number of CG steps in the last Newton step for $\lastgamma$)}
    \label{tab:heat2D_alpha}
    \centering
    \begin{tabular}{ccccccc}  
        \toprule
        $\alpha$     & $10^{-1}$  & $10^{-2}$  & $10^{-3}$  & $10^{-4}$  & $10^{-5}$ & $10^{-6}$\\
        \midrule
        $\tau_1$     & $200$      & $200$      & $199$      & $199$      & $17$      & $2$\\
        $\tau_2$     & $0$        & $0$        & $1$        & $1$        & $140$     & $39$\\
        $\tau_3$     & $0$        & $0$        & $0$        & $0$        & $43$      & $146$\\
        \midrule
        $\lastgamma$ & $10^{-12}$ & $10^{-12}$ & $10^{-12}$ & $10^{-12}$ & $10^{-6}$ & $10^{-7}$\\
        \#SSN        & $1$        & $1$        & $4$        & $2$        & $8$       & $15$\\
        \#CG         & $3$        & $3$        & $1$        & $1$        & $28$      & $50$\\
        \bottomrule
    \end{tabular}
\end{table}

To illustrate the convergence rate of the semismooth Newton method, we report in \cref{tab:heat2D_superlinear_convergence} on the convergence history for a single run of the Newton method for  $N=7$, $\alpha=0.01$ and $\gamma=10^{-7}$ without homotopy.
Both the norm of the residual in the reduced first order optimality condition \eqref{eq:ssn_red} and the number of control intervals where the active control components change (denoted by $F_k$ and $s_k$, respectively) show superlinear decay.
It can also be observed that the reduced optimality conditions are satisfied close to machine precision as soon as the active components no longer change.
This finite termination of semismooth Newton methods is typical for problems that are linear apart from the case distinction in $H_\gamma$ and shows the semismooth Newton method's relation to active set methods.
The described behavior is typical for large $\alpha$ or $\gamma$ where no line search is necessary. For moderate values, the iteration first requires some reduced steps before switching to full steps with superlinear convergence and finite termination.
For very small $\alpha$ and $\gamma$, the prescribed maximal number of iterations is insufficient to reach this convergence region, and the method terminates.
\begin{table}
    \caption{Convergence history of the semismooth Newton method for $N=7$, $\alpha=10^{-2}$ and $\gamma=10^{-7}$, where $F_k$ denotes the residual norm and
    $s_k$ denotes the number of control intervals where the active control components change}\label{tab:heat2D_superlinear_convergence}
    \centering
    \begin{tabular}{cccccc} 
        \toprule
        $k$         & $0$                  & $1$                  & $2$                  & $3$                  & $4$\\
        \midrule
        $F_k$       & $3.133\cdot 10^{-2}$ & $3.131\cdot 10^{-2}$ & $1.902\cdot 10^{-3}$ & $8.285\cdot 10^{-6}$ & $3.463\cdot 10^{-12}$\\
        $s_k$       & N/A                  & $200$                & $152$                & $4$                  & $0$ \\
        \bottomrule
    \end{tabular}
\end{table}

The behavior of the homotopy strategy for moderate $\alpha=10^{-3}$ is illustrated in \cref{tab:heat2D_gamma}, where both the number of control intervals $\tau_1,\tau_2,\tau_3$
with one, two or three active control components, respectively, and the number of Newton iterations together with the number of CG iterations in the last Newton step are shown.
For large $\gamma$, the regularization naturally prevents strong switching, which starts to appear for $\gamma<10^{-4}$; for $\gamma<10^{-8}$, the structure of active components no longer changes.
It can also be observed that  the number of Newton and CG steps stays relatively constant throughout the whole iteration.
The situation is different for small $\alpha$; see \cref{tab:heat2D_gamma2}. Since the regularized subdifferential depends on the relation between $\alpha$ and $\gamma$, the switching structure starts to appear later.
Depending on the problem, the limit case for small $\alpha,\gamma$ might have a solution that is not perfectly switching, in which case the unregularized subdifferential is set-valued.
In this example, we observe that for small $\alpha$ and $\gamma$, the Newton steps indeed become increasingly difficult to solve, as shown by the growing number of steps necessary before the iteration terminates.
\begin{table}
    \caption{Dependence on $\gamma$ for $N=7$, $\alpha=10^{-3}$ (\#{}SSN is the number of semismooth Newton steps, \#{}CG is the number of CG steps in the last Newton step)}
    \label{tab:heat2D_gamma}
    \centering
    \begin{tabular}{cccccccccc}  
        \toprule
        $\gamma$ & $10^{-2}$ & $10^{-3}$ & $10^{-4}$ & $10^{-5}$ & $10^{-6}$ & $10^{-8}$ & $10^{-10}$ & $10^{-12}$ \\
        \midrule
        $\tau_1$ & $0$       & $ 0$      & $ 101$    & $189$     & $198$     & $199$     & $199$      & $199$ \\
        $\tau_2$ & $0$       & $45$      & $91$      & $11$      & $2$       & $1$       & $1$        & $1$ \\
        $\tau_3$ & $0$       & $101$     & $8$       & $0$       & $0$       & $0$       & $0$        & $0$ \\
        \midrule
        \#SSN    & $3$       & $4$       & $3$       & $3$       & $3$       & $4$       & $2$        & $4$ \\
        \#CG     & $4$       & $5$       & $5$       & $4$       & $5$       & $4$       & $1$        & $1$ \\
        \bottomrule
    \end{tabular}
\end{table}
\begin{table}
    \caption{Dependence on $\gamma$ for $N=7$, $\alpha=5\cdot10^{-5}$ (\#{}SSN is the number of semismooth Newton steps, \#{}CG is the number of CG steps in the last Newton step)}
    \label{tab:heat2D_gamma2}
    \centering
    \begin{tabular}{cccccccccc}  
        \toprule
        $\gamma$     & $10^{-2}$ & $10^{-3}$ & $10^{-4}$ & $10^{-5}$ & $10^{-6}$ & $10^{-7}$ & $10^{-8}$ \\
        \midrule
        $\tau_1$     & $0$       & $0$      & $0$      & $49$     & $169$     & $186$     & $188$  \\
        $\tau_2$     & $0$       & $0$       & $0$       & $126$     & $28$      & $14$      & $12$ \\
        $\tau_3$     & $0$       & $0$       & $70$      & $19$      & $3$       & $0$       & $0$ \\
        \midrule
        \#SSN        & $3$       & $3$       & $3$       & $3$       & $7$       & $20$      & $21$ \\
        \#CG         & $4$       & $6$       & $8$       & $10$      & $11$      & $15$      & $24$ \\
        \bottomrule
    \end{tabular}
\end{table}

\section{Conclusion}

It is possible to promote optimal controls for parabolic differential equations with switching structure using a convex penalty, avoiding the need to introduce additional decision variables or explicit switching points.
A combination of Moreau--Yosida regularization and a semismooth Newton method allows the efficient numerical computation of controls for an arbitrary number of control components.
Since the proposed switching penalty is independent of the state equation, our approach can be adapted to hyperbolic and nonlinear problems. Furthermore, additional sparsity or control constraints can be incorporated into the presented convex analysis framework.
At the same time, the convexification is unable to guarantee perfect switching for arbitrary choice of $\alpha$.
Analytical issues including the dependency of the optimal solution on $\alpha$, the number of switches for the optimal control, and sufficient conditions ensuring perfect switching of the optimal control depend on the specific structure of the problem setting and are of interest for future research. 

\section*{Acknowledgment}

This work was supported in part by the Austrian Science Fund (FWF) under grant SFB {F}32 (SFB ``Mathematical Optimization and Applications in Biomedical Sciences'').

\printbibliography
\end{document}